\newtheorem{thm}{Theorem}
\newtheorem{prop}[thm]{Proposition}
\newtheorem{lem}[thm]{Lemma}
\newenvironment{rema}[1]{\noindent {\em Remark.} #1}{}
\begin{document}
\title{On the rank of $\pi_1(\textup{Ham}) $}
\author{Andr\'es Pedroza}
\address{Facultad de Ciencias\\
           Universidad de Colima\\
           Bernal D\'{\i}az del Castillo No. 340\\
           Colima, Col., Mexico 28045}
\email{andres\_pedroza@ucol.mx}

\begin{abstract}
We show that for any positive integer $k$
there exists a closed symplectic $4$-manifold, such that the rank
of the fundamental group of the group of Hamiltonian diffeomorphisms
is at least $k.$ 
\end{abstract}

%\keywords{Symplectic blow up,  Hamiltonian diffeomorphism group,  Weinstein's morphism.}
%\thanks{The author was supported by a CONACYT grant CB 2010/151846.}
%\subjclass[2000]{Primary: 57R17 Secondary: 57S05.}

\maketitle

%%%%%%%%%%%%%%%%%%%%%%%%%%%%%%%%%%%%%%%%%%%
%%%%%%%%%%%%%%%%%%%%%%%%%%%%%%%%%%%%%%%%%%%
\section{Introduction}
%%%%%%%%%%%%%%%%%%%%%%%%%%%%%%%%%%%%%%%%%%%
%%%%%%%%%%%%%%%%%%%%%%%%%%%%%%%%%%%%%%%%%%%

The problem of determining the homotopy type of the group of Hamiltonian
diffeomorphisms for a closed symplectic manifold is  nowadays a far reaching problem 
in symplectic topology.
In order to have an idea of the limited knowledge in the subject, absolutely nothing is known 
about the homotopy type of the group $\textup{Ham}(\mathbb{T}^4,\omega)$ where
$(\mathbb{T}^4,\omega)$ is the $4$-dimensional torus with the standard symplectic form.
Note that we stated an example of a 4-dimensional manifold, since for 2-dimensional symplectic
manifolds the problem is partially understood due to the fact that in two dimensions
symplectic geometry  agrees with area and orientation preserving geometry. 
See for instance \cite[Sec. 7.2]{polterovich-thegeometry}.
In this direction,
$\textup{Ham}(S^2,\omega)=\textup{Symp}_0(S^2,\omega)$ has the same homotopy type
as $SO(3)$ \cite{smale}; and 
for the surface of genus $g\geq 1$,
$\textup{Ham}(M_g,\omega)$
 is simply connected.

In higher dimensions there are  some cases where the homotopy type of $\textup{Ham}(M,\omega)$
is completely understood, due the techniques of holomorphic curves. For instance,
$\textup{Ham}(\mathbb{C}P^2,\omega_{FS})$ has the homotopy type of $PU(3)$;
$\textup{Ham}(\mathbb{C}P^1\times\mathbb{C}P^1,\omega_{FS}\oplus \omega_{FS})$ has the 
homotopy type of $SO(3)\times SO(3)$. These results are due to M. Gromov \cite{gromov-psudo}.
The rational homotopy type
for the case of one-point blow up of $(\mathbb{C}P^2,\omega_\textup{FS})$ was  settled
by M. Abreu and D. McDuff  in \cite{abreu-mcduff-topology}. 
For more examples, see the work of 
F. Lalonde and M. Pinsonnault  \cite{Lalonde-Pin};  and  J. Evans \cite{Evans-symplectic-mapping}.

Leave behind the problem of determining the homotopy type 
and focus on first stage of the problem: the fundamental group
of $\textup{Ham}(M,\omega)$. 
Recall that the fundamental group of a topological group is an abelian group.
Hence is natural to ask: Given any positive integer $k$, does there exists a 
symplectic manifold such that the free part of $\pi_1(\textup{Ham}(M,\omega))$
is isomorphic to  $\mathbb{Z}^k$. Using cartesian products of symplectic manifolds together with
Seidel's representation \cite{seidel-pi1of} or Weinstein's morphism \cite{weinstein-coho} if possible 
to provide a weak answer to the problem.  
Namely, is possible to construct a symplectic manifold $(M,\omega)$ such that the rank
of $\pi_1(\textup{Ham}(M,\omega))$ is at least  $k$. 
See for instance \cite{pedroza-seidel}, where Seidel's morphism on cartesian products is studied.
In this note we arrive to the same
conclusion but on 4-dimensional symplectic manifolds. 
That is, in the smallest possible 
dimension. 

\begin{thm}
\label{t:main}
Given a positive integer $k$, there exists a closed,  connected and simply connected symplectic $4$-manifold
$(M,\omega)$
such that
\begin{eqnarray*}
\textup{rank } \pi_1(\textup{Ham}(M,\omega)) \geq k.
\end{eqnarray*}
\end{thm}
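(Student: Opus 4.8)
The plan is to realize $M$ as a $k$-fold symplectic blow-up of a rational ruled surface and to detect $k$ independent loops of Hamiltonian diffeomorphisms through Seidel's morphism \cite{seidel-pi1of}. Concretely, I would start from $S^2\times S^2$ (or $\mathbb{C}P^2$) and blow up $k$ disjoint symplectically embedded balls of sizes $\delta_1,\dots,\delta_k$, chosen small enough to be pairwise disjoint and, crucially, with the areas $\delta_1,\dots,\delta_k$ \emph{rationally independent}. The resulting $(M,\omega)$ is closed and symplectic; since the underlying smooth manifold is a blow-up of a simply connected $4$-manifold, it is connected and simply connected, so all the hypotheses on $M$ are met. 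The extra homology classes $E_1,\dots,E_k$ of the exceptional divisors, together with their independent areas, are the raw material that will separate the loops.

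Next I would produce, for each $i$, a Hamiltonian loop $\gamma_i\in\pi_1(\textup{Ham}(M,\omega))$ attached to the $i$-th blow-up. The natural source is a Hamiltonian circle action whose fixed-point data single out $E_i$: one performs the $i$-th blow-up at a fixed point of a circle action and lifts the action to $M$, so that the maximal fixed component and the maximal value of the moment map involve the size $\delta_i$. To compute the classes of these loops I would use Seidel's representation $\mathcal{S}\colon\pi_1(\textup{Ham}(M,\omega))\to QH_\ast(M,\omega)^\times$ together with the fixed-point formula for the Seidel element of a circle action: its lowest-order term is carried by the maximal fixed component and weighted by a power of the Novikov variable equal to the maximal value of the moment map. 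In this way each $\mathcal{S}(\gamma_i)$ acquires a Novikov valuation that is an explicit linear expression in the areas $\delta_1,\dots,\delta_k$.

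The separation of the $\gamma_i$ then comes from a valuation argument. The Novikov valuation $\nu$ of the lowest-order term is a group homomorphism $QH_\ast(M,\omega)^\times\to\mathbb{R}$, so $\nu\circ\mathcal{S}$ is a homomorphism $\pi_1(\textup{Ham}(M,\omega))\to\mathbb{R}$ --- essentially Weinstein's morphism, the action--Maslov homomorphism \cite{weinstein-coho}. Because $\mathcal{S}$ is a homomorphism into a commutative group, $\nu\circ\mathcal{S}$ is additive, and the values $\nu(\mathcal{S}(\gamma_1)),\dots,\nu(\mathcal{S}(\gamma_k))$ are, by construction, $\mathbb{Q}$-linearly independent real numbers once the $\delta_i$ are rationally independent. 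Hence no nontrivial integral combination $\sum n_i\gamma_i$ can be null-homotopic, and $\gamma_1,\dots,\gamma_k$ generate a subgroup isomorphic to $\mathbb{Z}^k$. This yields $\textup{rank}\,\pi_1(\textup{Ham}(M,\omega))\geq k$, as desired; the same bookkeeping underlies the product construction of \cite{pedroza-seidel}, here compressed into dimension four.

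I expect the genuine obstacle to be the construction of the loops themselves rather than their detection. A single torus action contributes at most rank two to $\pi_1(\textup{Ham})$, since all its circle subgroups factor through $\pi_1(T^2)=\mathbb{Z}^2$; so the $k$ loops cannot be taken inside one torus and must exploit the $k$ distinct exceptional divisors in an essential, non-toric way. The delicate points are therefore to arrange circle actions (or loops) whose maximal fixed data really do depend on the separate sizes $\delta_i$, and to verify that the resulting Seidel valuations neither collapse together nor vanish. Rational independence of the blow-up sizes is the device that ultimately forces the $k$ invariants apart, and controlling the fixed-point contributions of the lifted actions on the iterated blow-up is the main computation to carry out.
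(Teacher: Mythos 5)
Your proposal has a genuine gap at its very core, and you half-admit it yourself in the last paragraph: the $k$ independent loops are never constructed. Lifted circle actions cannot do the job as sketched, since (as you correctly observe) any loops coming from a fixed torus action span at most rank two, and you offer no mechanism that actually exploits the $k$ exceptional divisors separately --- you only say such a mechanism would be ``delicate'' and ``the main computation to carry out.'' That computation \emph{is} the theorem. The paper resolves exactly this point by abandoning circle actions altogether: it builds, by hand, a compactly supported Hamiltonian loop in $\textup{Ham}^c(\mathbb{R}^4,\omega_0)$ from a path of unitary matrices cut off by a bump function (concatenating two such paths to close it up), places one copy in each of $k$ disjoint Darboux balls in $(\mathbb{C}P^2,\omega_{FS})$, and lifts each to the $k$-fold blow-up using \cite{pedroza-hamiltonian}. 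These loops are null-homotopic downstairs and become detectable only after blowing up --- precisely the ``non-toric'' exploitation of the $E_j$ that your sketch lacks.

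Your detection mechanism is also flawed as stated. The Novikov valuation of the lowest-order term of a Seidel element is \emph{not} a group homomorphism $QH_\ast(M,\omega)^\times\to\mathbb{R}$: the valuation of a quantum product can exceed the sum of the valuations when leading terms annihilate, so $\nu\circ\mathcal{S}$ need not be additive without a semisimplicity-type argument you do not supply. Moreover, the honest invariant here --- Weinstein's morphism, which the paper does use --- takes values in $\mathbb{R}/\mathcal{P}(\widetilde M,\widetilde\omega)$, the quotient by the period group, not in $\mathbb{R}$; since the period group of the blow-up \emph{contains} the blow-up areas $\pi r_1^2,\ldots,\pi r_k^2$ themselves, your device of choosing the sizes $\delta_i$ merely $\mathbb{Q}$-linearly independent is insufficient. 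The detected value in the paper is the nonlinear quantity $\bigl(\pi^3 r_j^6/6\bigr)\big/\bigl(V-\pi^2(r_1^4+\cdots+r_k^4)/2\bigr)$ taken modulo $\mathbb{Z}\langle \pi R^2,\pi r_1^2,\ldots,\pi r_k^2\rangle$, and ruling out all integral relations among the $[\widetilde\psi^{(j)}]$ requires the stronger arithmetic condition of Lemma \ref{l:help} (no rational solutions of a cubic--quartic polynomial relation in the $y_j=\pi r_j^2$), proved from the infinite-dimensionality of $\mathbb{R}$ over $\mathbb{Q}$. So both pillars of your argument --- the source of the loops and the homomorphism that separates them --- need to be replaced, not merely tightened.
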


The proof that we provide is a hands-on proof. 
The symplectic $4$-manifold of the theorem turns out to be
the blow up of $(\mathbb{C}P^2,\omega_\textup{FS})$ at $k$ 
points of  distinct weights.
%We construct explicitly the $k$ loops of Hamiltonian
%diffeomorphisms and  $(M,\omega)$ 
The techniques used throughout this note are soft techniques of symplectic topology, where
Weinstein's morphism plays a key role.

If $\textup{Symp}_0(M,\omega)$ stands for
the connected component of $\textup{Symp}(M,\omega)$
that contains the identity map, then  the inclusion
$\textup{Ham}(M,\omega)\subset\textup{Symp}_0(M,\omega)$
induces an injective map
$$
\pi_1(\textup{Ham}(M,\omega))\to  \pi_1(\textup{Symp}_0(M,\omega))
$$
 due to  the Flux morphism, \cite[Ch. 10]{ms}. Therefore, Theorem  \ref{t:main} also holds
if  the group $\textup{Ham}(M,\omega)$ is replaced by $\textup{Symp}_0(M,\omega)$.

Unlike the group of Hamiltonian diffeomorphisms of a closed symplectic manifold, the
group of symplectic diffeomorphisms is not necessarily connected.
Therefore, following the same line of ideas is  natural to ask
if given a positive number $k$ does there exists a closed connected symplectic manifold $(M,\omega)$ such that 
the number of connected components of $\textup{Symp}(M,\omega)$ is equal to $k$. Recently,
D. Aroux and I. Smith solved this problem in \cite[Thm 1.3]{auroux-smith} via Floer-theoretic arguments.

As a byproduct of the arguments used to prove the main result, we are also able to
show that  Calabi's morphism on the one-point blow up of $(\mathbb{R}^4,\omega_0)$
is non trivial.  
The first examples of  open manifolds whose Calabi's morphism is non trivial 
are due to   A. Kislev \cite{kislev-compact}. 
Alongside we prove that
 the rank
of the fundamental group of $\textup{Ham}(\widetilde M,\widetilde \omega_\kappa)$ is positive 
where $(\widetilde M,\widetilde \omega_\kappa)$ is the 
 one-point blow of weight $\kappa$ of $(M,\omega)$.
Hence our result improves the one obtained by D. McDuff \cite{mcduff-blowup},
since  information about $\pi_1(M)$ and $\pi_2(M)$ is irrelevant in our arguments.

\begin{thm}
\label{t:onepoint}
Let $(\widetilde M,\widetilde \omega_\kappa)$ be the one-point  blow up of  weight $\kappa$
of  the closed manifold $(M,\omega)$. 
Then for infinitely many values of $\kappa$,
 the rank of $\pi_1(\textup{Ham}(\widetilde M,\widetilde \omega_\kappa))$ is positive.
\end{thm}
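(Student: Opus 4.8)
The plan is to detect a non-contractible loop in $\ham(\widetilde M,\widetilde\omega_\kappa)$ by evaluating Weinstein's morphism on a loop that is entirely local near the exceptional divisor. Recall that Weinstein's construction assigns to a based loop $\Lambda=\{\phi_t\}$ in $\ham(N,\sigma)$, generated by a normalized Hamiltonian $H_t$, a real number modulo the period group
\[
\Gamma(N,\sigma)=\{\,\langle[\sigma],A\rangle : A\in\pi_2(N)\,\}\subset\mathbb{R},
\]
obtained from the action of the loop together with the coupling class of the associated $N$-bundle over $S^2$; a loop with non-zero value is non-contractible. Since the flux homomorphism makes $\pi_1(\ham)\to\pi_1(\symp_0)$ injective, it suffices to produce a single Hamiltonian loop $\Lambda_\kappa$ with $\mathcal{W}(\Lambda_\kappa)\neq 0$ in $\mathbb{R}/\Gamma(\widetilde M,\widetilde\omega_\kappa)$ for infinitely many weights $\kappa$.

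First I would fix a Darboux ball $B\subset M$ around the blow-up point and identify a neighborhood of the exceptional sphere $E\subset\widetilde M$ with a neighborhood of the zero section in the total space of $\mathcal{O}(-1)\to\mathbb{C}P^1$, that is, with the one-point blow up of $(\mathbb{R}^4,\omega_0)$ of weight $\kappa$. On this local model the diagonal circle action fixes $E$ and rotates the normal directions, and from it I would build the distinguished loop $\Lambda_\kappa$, supported in the chosen neighborhood so that it extends by the identity to a genuine Hamiltonian loop on all of $\widetilde M$. Because the support sits in a neighborhood that is contractible in $\widetilde M$ and $E$ is the fixed locus, the evaluation of $\mathcal{W}(\Lambda_\kappa)$ localizes: it equals the Calabi invariant of the corresponding compactly supported loop on the blow up of $(\mathbb{R}^4,\omega_0)$, which reduces the global problem to the byproduct asserting that Calabi's morphism there is non-trivial. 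Writing the generating Hamiltonian in terms of the moment map $\mu$ of the local circle action and integrating against $\widetilde\omega_\kappa^2$ then turns the Calabi integral into an elementary one over the moment interval, yielding a value $w(\kappa)$ that is a non-constant polynomial in $\kappa$; this is the number that $\mathcal{W}(\Lambda_\kappa)$ equals modulo $\Gamma(\widetilde M,\widetilde\omega_\kappa)$.

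Finally I would run the arithmetic that upgrades a single weight to infinitely many. The period group $\Gamma(\widetilde M,\widetilde\omega_\kappa)$ is generated by the finitely many symplectic areas inherited from $(M,\omega)$ together with the area of the exceptional class, and so is a countable subset of $\mathbb{R}$ depending on $\kappa$ only through that area. The bad set of weights, those for which $w(\kappa)$ lands in $\Gamma(\widetilde M,\widetilde\omega_\kappa)$, is therefore a countable union of solution sets of countably many polynomial equations in $\kappa$, and such a set cannot contain an interval; hence its complement is infinite. For every weight in the complement one has $\mathcal{W}(\Lambda_\kappa)\neq 0$, so $\Lambda_\kappa$ is non-contractible and $\textup{rank }\pi_1(\ham(\widetilde M,\widetilde\omega_\kappa))\geq 1$. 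Note that this argument uses nothing about $\pi_1(M)$ or $\pi_2(M)$, which is exactly the improvement over the earlier blow-up results.

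The main obstacle I expect is the middle step: realizing $\Lambda_\kappa$ as an honest loop on the closed manifold and proving the localization $\mathcal{W}(\Lambda_\kappa)=w(\kappa)\ (\mathrm{mod}\ \Gamma)$. The delicate point is that a naive reparametrized rotation near $E$ does not close up to the identity away from the divisor, since a compactly supported circle action forces the rotation rate to be integer-valued and hence constant; producing a genuine Hamiltonian loop requires exploiting the fixed locus $E$ together with a careful cut-off, and then showing that the contributions of the cut-off region do not alter the value of Weinstein's morphism. Establishing that the resulting local Calabi invariant is genuinely non-zero — rather than being absorbed by the period group once the cut-off is performed — is the crux on which the whole argument rests.
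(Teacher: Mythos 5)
Your overall skeleton matches the paper's: a Hamiltonian loop localized near the blow-up point, evaluated by Weinstein's morphism via a localization formula, followed by an arithmetic argument in the weight $\kappa$ to get infinitely many good weights. (Your countability argument for the bad set of weights is in fact a legitimate variant of the paper's choice, which instead takes $(M,\omega)$ rational and $\pi\kappa^2$ transcendental so that $\mathcal{A}(\widetilde\psi^m)=0$ becomes an impossible polynomial equation with rational coefficients.) But the step you yourself flag as the crux is genuinely missing, and it is exactly the step the paper's Section 2 exists to supply. You propose to build the loop \emph{upstairs}, from the circle action rotating the normal directions of the exceptional sphere $E$, cut off near $E$; as you correctly observe, a radial cut-off of a rotation cannot close up into a loop, since the rotation rate of a circle action is integer-valued and hence cannot interpolate from $1$ near $E$ to $0$ near infinity. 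Your proposed fix --- ``exploiting the fixed locus $E$ together with a careful cut-off'' --- is not a construction, and the non-vanishing of the resulting local Calabi-type integral (which you defer to ``the byproduct'') is precisely what has to be proved: in the paper that byproduct (Proposition \ref{p:cal}) is itself a \emph{consequence} of the loop construction, so invoking it here is circular.

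The paper resolves the closing-up problem differently and downstairs: it builds the loop $\psi$ on $(\mathbb{R}^4,\omega_0)$ \emph{before} blowing up, as the concatenation (\ref{e:loop}) of two cut-off Hamiltonian \emph{paths} coming from paths of unitary matrices $\{A_t\}$, $\{B_t\}$ that agree near $t=1$ but carry different ``winding'' functions $\alpha$, $\beta$. The concatenation is a genuine compactly supported loop by design, it fixes the origin and is linear (unitary) on the small ball $B_{r_0}$, so by \cite[Sec.\ 3]{pedroza-hamiltonian} it lifts to a Hamiltonian loop on the blow-up; Lemma \ref{l:inte} and Proposition \ref{p:integral} then give the quantitative input
\[
\int_0^2 \int_{B_{r_0}} H_t\, \omega_0^2\, dt =\left( \frac{\pi^3 {r_0}^6}{6} \right)\bigl(\alpha(0)-\alpha(1)-\beta(0)+\beta(1)\bigr)\neq 0,
\]
while the total integral over $\mathbb{R}^4$ vanishes (Calabi is trivial on an exact manifold), which is what makes the normalization constants $c_t$ integrate to zero and keeps the localized value from being ``absorbed'' as you feared. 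Note also a small inaccuracy in your localization claim: by \cite[Thm.\ 1.1]{pedroza-hamiltonian} the Weinstein value is the local integral divided by $\textup{Vol}(\widetilde M,\widetilde\omega_\kappa^2)$, not the Calabi invariant itself; this matters because the volume contributes the $\kappa$-dependence that your polynomial non-degeneracy check in the final step silently relies on. Without an actual construction replacing Section 2, your argument does not close.
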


It is worth mentioning the case $M=\mathbb{T}^4$ with the standard symplectic form. Therefore,
$\pi_1(\textup{Ham}(\widetilde{\mathbb{T}}^4,\widetilde\omega_\kappa))$  has positive rank. However,
nothing is known about the group $\pi_1(\textup{Ham}({\mathbb{T}}^4,\omega))$.

We are grateful to L. Polterovich from bringing \cite{kislev-compact} to our attention.

%%%%%%%%%%%%%%%%%%%%%%%%%%%%%%%%%%%%%%%%
%%%%%%%%%%%%%%%%%%%%%%%%%%%%%%%%%%%%%%%%
\section{Preliminary computations}
\label{s:compu}
%%%%%%%%%%%%%%%%%%%%%%%%%%%%%%%%%%%%%%%%
%%%%%%%%%%%%%%%%%%%%%%%%%%%%%%%%%%%%%%%%

\subsection{A compactly supported path of Hamiltonian diffeomorphisms on $(\mathbb{R}^4,\omega_0)$}
Consider $1$-periodic smooth functions $a_1,a_2,a_3,a_4 :\mathbb{R}\to \mathbb{R}$ such
that $a_1(0)=a_4(0)=1$ and $a_2(0)=a_3(0)=0$. Let $\alpha :\mathbb{R}\to \mathbb{R}$ 
be also a smooth function such that $\alpha(0)\in \mathbb{Z}$.
Furthermore, the functions $a_j$ are 
also subject to the condition that
\begin{eqnarray}
\label{e:mat}
A_t:= 
\begin{pmatrix}
a_1(t)e^{2\pi i\, \alpha(t)} & a_2(t) e^{-2\pi i t} \\
a_3(t) e^{2\pi i\, \alpha(t)} &a_4(t) e^{-2\pi i t} 
 \end{pmatrix} \ \  \ \  t\in[0,1] 
\end{eqnarray}
is a $2\times 2$ unitary matrix. Notice that the only constraint on $\alpha$
is $\alpha(0)\in \mathbb{Z}$, it does not have to be periodic unlike the functions $a_j$.
Thus $\{A_t\}_{0\leq t \leq 1}$ is a path in $U(2)$ that
stars at the identity.
Let $\psi^{\bf a}_t:(\mathbb{C}^2,\omega_0)\to(\mathbb{C}^2,\omega_0)$ be the 
path of Hamiltonian diffeomorphism induced by $\{A_t\}$.  A direct computation
yields the Hamiltonian $H^{\bf a}_t $ and time-dependent vector field 
$X^{\bf a}_t $ induced by $\{\psi^{\bf a}_t\}$.

\begin{lem}
\label{l:description}
The path $\{\psi^{\bf a}_t\}_{0\leq t\leq 1}$ induced by the path of unitary matrices $\{A_t\}$ 
induces in
$(\mathbb{R}^4,\omega_0)$ the time-dependent vector field
\begin{eqnarray*}
X^{\bf a}_t 
&=&   
\left \{2\pi y_1(a_2^2-a_1^2\alpha^\prime)   +x_2(a_1^\prime a_3+a_2^\prime a_4)+2\pi y_2 (a_2a_4-a_1a_3\alpha^\prime)   \right\}\frac{\partial}{\partial x_1}  \\
& &   
+\left \{2\pi x_1(a_1^2\alpha^\prime-a_2^2)   +2\pi x_2(a_1a_3\alpha^\prime-a_2a_4)+y_2 (a_1^\prime a_3+a_2^\prime a_4)   \right\}\frac{\partial}{\partial y_1} \\
& &   
+\left \{x_1 (a_1a_3^\prime +a_2a_4^\prime) +2\pi y_1(a_2 a_4-a_1 a_3\alpha^\prime)+2\pi y_2 (a_4^2-a_3^2\alpha^\prime)   \right\}\frac{\partial}{\partial x_2} \\
& &   
+\left \{2\pi x_1(a_1 a_3\alpha^\prime-a_2 a_4)   -y_1(a_1a_3^\prime +a_2 a_4^\prime )+2\pi x_2 (a_3^2\alpha^\prime-a_4^2)   \right\}\frac{\partial}{\partial y_2} ,
\end{eqnarray*}
and Hamiltonian function
\begin{eqnarray*}
H^{\bf a}_t(x_1,y_1,x_2,y_2) 
&=&   \pi (-a_1^2\alpha^\prime+a_2^2) (x_1^2+y_1^2) + \pi (-a_3^2\alpha^\prime+a_4^2) (x_2^2+y_2^2) \\
& & +  2 \pi (a_2 a_4-a_1a_3\alpha^\prime) (x_1x_2+y_1y_2)  \\
& & +(a_1a_3^\prime+a_2 a_4^\prime) (x_1y_2-x_2y_1).
\end{eqnarray*}
\end{lem}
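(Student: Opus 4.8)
The plan is to exploit the fact that $\{\psi_t^{\mathbf a}\}$ is a path of \emph{linear} symplectomorphisms, so that both its generating vector field and its Hamiltonian are forced by elementary linear algebra on the matrices $A_t$. Identifying $(\mathbb{C}^2,\omega_0)$ with $(\mathbb{R}^4,\omega_0)$ via $z_1=x_1+iy_1$, $z_2=x_2+iy_2$ and $\omega_0=dx_1\wedge dy_1+dx_2\wedge dy_2$, the diffeomorphism $\psi_t^{\mathbf a}$ is the real linear map with complex matrix $A_t$. Since $\psi_t^{\mathbf a}(p)=A_t\,p$, differentiating in $t$ gives $\frac{d}{dt}\psi_t^{\mathbf a}(p)=\dot A_t\,p$, and setting $q=\psi_t^{\mathbf a}(p)$ identifies the time-dependent vector field as the linear field
\[
X_t^{\mathbf a}(q)=\big(\dot A_t\,A_t^{-1}\big)\,q .
\]
Thus the entire computation reduces to pinning down the matrix $\dot A_t A_t^{-1}$ and transcribing it into real coordinates.

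First I would use unitarity to invert $A_t$ for free: $A_t^{-1}=A_t^{*}$, the conjugate transpose. Forming $\dot A_t A_t^{*}$ then produces a skew-Hermitian $2\times 2$ matrix (so that $\dot A_t A_t^{-1}\in\mathfrak{u}(2)\subset\mathfrak{sp}(4,\mathbb{R})$), whose entries are explicit expressions in $a_j,a_j',\alpha,\alpha'$ and $t$. Here the orthonormality of the columns of $A_t$, namely $a_1^2+a_3^2=1$, $a_2^2+a_4^2=1$ and $a_1a_2+a_3a_4=0$, together with their $t$-derivatives, is exactly what collapses the phase factors $e^{2\pi i\alpha}$ and $e^{-2\pi it}$ and the various cross terms into the coefficients appearing in the statement. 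Passing from the complex entry $u+iv$ of $\dot A_t A_t^{*}$ to the real $2\times 2$ block $\left(\begin{smallmatrix}u&-v\\ v&u\end{smallmatrix}\right)$, assembling the $4\times 4$ matrix acting on $(x_1,y_1,x_2,y_2)$, and applying it to that vector, one reads off the four components of $X_t^{\mathbf a}$ claimed above.

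For the Hamiltonian I would invoke the standard correspondence between linear symplectic vector fields and quadratic forms: since $X_t^{\mathbf a}$ is linear with generating matrix in $\mathfrak{sp}(4,\mathbb{R})$, it is Hamiltonian with quadratic generating function
\[
H_t^{\mathbf a}(q)=\tfrac12\,\omega_0\!\left(X_t^{\mathbf a}(q),\,q\right),
\]
which one checks directly satisfies $\iota_{X_t^{\mathbf a}}\omega_0=dH_t^{\mathbf a}$ in the convention fixed by the statement. (Equivalently, $H_t^{\mathbf a}$ is the moment-map component of the $U(2)$-action in the direction $\dot A_t A_t^{*}$.) Substituting the components found above and again simplifying with the orthonormality relations delivers the stated quadratic polynomial; as a sanity check, the diagonal case $a_1=a_4=1$, $a_2=a_3=0$, $\alpha'=0$ reduces $X_t^{\mathbf a}$ to $2\pi y_2\,\partial/\partial x_2-2\pi x_2\,\partial/\partial y_2$ and $H_t^{\mathbf a}$ to $\pi(x_2^2+y_2^2)$, as it should.

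The conceptual input is therefore minimal; the main obstacle is purely computational bookkeeping: differentiating the entries of $A_t$ while keeping careful track of the two families of phase factors, and then systematically applying the unitarity constraints to reduce the raw skew-Hermitian expressions to the compact form displayed in the lemma.
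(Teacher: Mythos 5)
Your proposal is correct and is essentially the paper's own (unwritten) proof: the paper merely asserts that ``a direct computation yields'' $H^{\bf a}_t$ and $X^{\bf a}_t$, and your route --- $X^{\bf a}_t(q)=\dot A_t A_t^{*}\,q$ with the unitarity relations (for a square matrix your column relations $a_1^2+a_3^2=a_2^2+a_4^2=1$, $a_1a_2+a_3a_4=0$ are equivalent to the row relations $a_1^2+a_2^2=a_3^2+a_4^2=1$, $a_1a_3+a_2a_4=0$ actually used in simplifying $\dot A_tA_t^{*}$), realification via $u+iv\mapsto\left(\begin{smallmatrix}u&-v\\ v&u\end{smallmatrix}\right)$, and $H^{\bf a}_t(q)=\tfrac12\,\omega_0(X^{\bf a}_t(q),q)$ --- is exactly that computation, in the correct convention $\iota_{X}\omega_0=dH$. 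One thing your method would surface if carried to the end: in the printed $\partial/\partial y_2$ component the coefficient of $y_1$ should be $-(a_1^\prime a_3+a_2^\prime a_4)=+(a_1a_3^\prime+a_2a_4^\prime)$ rather than $-(a_1a_3^\prime+a_2a_4^\prime)$, i.e.\ that single term of the stated $X^{\bf a}_t$ is a sign typo inconsistent with the stated (and correct) Hamiltonian $H^{\bf a}_t$, whose form your quadratic-form formula and diagonal sanity check do confirm.
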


For $r>0$ denote by $B_r$ the open ball of radius $r$ in $\mathbb{R}^4$ centered at the origin.
Using the fact that the functions $a_j$ define the
unitary matrix  (\ref{e:mat}),  it follows that the integral of $H^{\bf a}_t$ over
the ball $B_r$ only depends on $\alpha$ and $r$.
\begin{lem}
\label{l:inte}
If $H^{\bf a}_t$ is the Hamiltonian function of Lemma \ref{l:description},
then
\begin{eqnarray*}
\int_0^1 \int_{B_r} H^{\bf a}_t\, \omega_0^2\, dt =\left( \frac{\pi^3 r^6}{6} \right) (1+\alpha(0)-\alpha(1)).
\end{eqnarray*}
\end{lem}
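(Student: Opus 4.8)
The plan is to evaluate the double integral by integrating over the ball $B_r$ first and exploiting the symmetry of $B_r$ together with the unitarity of $A_t$. Looking at the explicit formula for $H^{\bf a}_t$ in Lemma \ref{l:description}, the Hamiltonian is a sum of quadratic monomials in $(x_1,y_1,x_2,y_2)$ with $t$-dependent coefficients built from the $a_j$ and $\alpha$. Since $B_r$ is invariant under the reflections that flip the sign of any single coordinate, every integral of a monomial that is odd in at least one variable vanishes. In particular the cross terms $x_1x_2$, $y_1y_2$, $x_1y_2$ and $x_2y_1$ integrate to zero over $B_r$, so the entire third line and the fourth line of $H^{\bf a}_t$ contribute nothing. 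Only the two diagonal pieces $\pi(a_2^2-a_1^2\alpha')(x_1^2+y_1^2)$ and $\pi(a_4^2-a_3^2\alpha')(x_2^2+y_2^2)$ survive.

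Next I would compute the two surviving integrals. By the rotational symmetry of $B_r$ all four coordinates play the same role, so $\int_{B_r}(x_1^2+y_1^2)\,\omega_0^2=\int_{B_r}(x_2^2+y_2^2)\,\omega_0^2$, and this common value is obtained by a routine radial computation in $\mathbb{R}^4$: slicing $B_r$ by the spheres of radius $s$ (whose $3$-volume is proportional to $s^3$) and integrating $s^2$ against $s^3\,ds$ from $0$ to $r$ produces, after accounting for the four-fold symmetry,
\begin{eqnarray*}
\int_{B_r}(x_1^2+y_1^2)\,\omega_0^2=\int_{B_r}(x_2^2+y_2^2)\,\omega_0^2=\frac{\pi^2 r^6}{6}.
\end{eqnarray*}
Substituting this into the two surviving terms gives
\begin{eqnarray*}
\int_{B_r} H^{\bf a}_t\,\omega_0^2=\frac{\pi^3 r^6}{6}\Big[(a_2^2-a_1^2\alpha')+(a_4^2-a_3^2\alpha')\Big]
=\frac{\pi^3 r^6}{6}\Big[(a_2^2+a_4^2)-(a_1^2+a_3^2)\alpha'\Big].
\end{eqnarray*}

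This is the step where the unitarity of $A_t$ enters and does the real work. Orthonormality of the columns of the matrix in (\ref{e:mat}) forces $a_1^2+a_3^2=1$ and $a_2^2+a_4^2=1$, so the bracket collapses to the single expression $1-\alpha'(t)$, independent of the particular functions $a_j$. Integrating in $t$ and using $\int_0^1\alpha'(t)\,dt=\alpha(1)-\alpha(0)$ yields the claimed value $\tfrac{\pi^3 r^6}{6}\,(1+\alpha(0)-\alpha(1))$. I do not expect any genuine obstacle here: the symmetry vanishing and the radial integral are standard, and the only point requiring care is correctly reading off the two orthonormality relations from (\ref{e:mat}) and checking the normalization constant of $\omega_0^2$. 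The conceptual content of the lemma is precisely that unitarity erases all dependence on the $a_j$, leaving only the endpoint data of $\alpha$, which is what later makes the quantity a useful invariant.
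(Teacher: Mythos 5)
Your proof is correct and takes essentially the same route as the paper's: the cross terms of $H^{\bf a}_t$ vanish over $B_r$ by symmetry, the radial computation gives $\int_{B_r}(x_1^2+y_1^2)\,\omega_0^2=\pi^2 r^6/6$, and unitarity (via the column relations $a_1^2+a_3^2=1$, $a_2^2+a_4^2=1$) collapses the time-dependent coefficient to $1-\alpha'(t)$, whose integral yields $1+\alpha(0)-\alpha(1)$. The paper's proof is exactly this argument, stated more tersely.
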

\begin{proof}
The integral over $B_r$ of the last two terms of $H^{\bf a}_t$ are zero. 
Now since the functions $a_j$ are the entries of a unitary matrix, it follows that
\begin{eqnarray*}
\int_0^1 \int_{B_r} H^{\bf a}_t\, \omega_0^2\, dt   &=& 
\pi \int_{B_r}  (x_1^2+y_1^2) \, \omega_0^2 \cdot \int_0^1  -a_1^2\alpha^\prime+a_2^2 -a_3^2\alpha^\prime+a_4^2\, dt \\
 &=& \pi\left( \frac{\pi^2 r^6}{6} \right) \int_0^1  1-\alpha^\prime\, dt \\
 &=& \pi\left( \frac{\pi^2 r^6}{6} \right) (1+\alpha(0)-\alpha(1)).
\end{eqnarray*}
\end{proof}

We are interested in the case when the above integral is non zero. 
Thereby, by the last result we neglect the functions $a_j$  and focus our attention on $\alpha$.

Next we used a bump function to obtain a compactly supported Hamiltonian path.
This is the reason that  we considered a Hamiltonian path instead of a Hamiltonian loop.
If a Hamiltonian function induces a loop, then multiplying it by a bump function 
will no longer induce a loop.
To that end
fix $r_0,R_0\in \mathbb{R}$ such that $0<r_0<R_0$ and also fix a  bump function
$\rho: \mathbb{R}^4\to \mathbb{R}$ such that $\rho\equiv 1$ on $B_{r_0}$
and $\rho\equiv 0$ on $\mathbb{R}^4\setminus B_{R_0}$.
Consider  the compactly supported smooth function  
$H^{\bf a,\rho}_t: (\mathbb{R}^4,\omega_0)\to\mathbb{R}$  defined as
$$
H^{\bf a,\rho}_t:= \rho\cdot H^{\bf a}_t
$$
and let $\{\psi^{\bf a,\rho}_t\}_{0\leq t\leq 1}$ be the induced Hamiltonian path.
Observe that  $\psi^{\bf a,\rho}_t \in \textup{Ham}^c(\mathbb{R}^4,\omega_0)$
and that on the ball  $B_{r_0}$ we have that $\psi^{\bf a,\rho}_t=\psi^{\bf a}_t$ 
and $H^{\bf a,\rho}_t=H^{\bf a}_t$ for all $t\in[0,1]$.

%%%%%%%%%%%%%%%%%%%%%%%%%%%%%%%%%%%%%%
%%%%%%%%%%%%%%%%%%%%%%%%%%%%%%%%%%%%%%
\subsection{A compactly supported loop of Hamiltonian diffeomorphisms on $(\mathbb{R}^4,\omega_0)$}
%%%%%%%%%%%%%%%%%%%%%%%%%%%%%%%%%%%%%%
%%%%%%%%%%%%%%%%%%%%%%%%%%%%%%%%%%%%%%
Next we define a loop  in $\textup{Ham}^c(\mathbb{R}^4,\omega_0)$ based at the identity
by concatenating two
paths of the kind defined above. Fix $a_1,\ldots a_4,b_1,\ldots b_4 $ smooth 1-periodic functions
and $\alpha$ and $\beta $ also smooth functions, such that all functions are  subject to the 
conditions previously imposed.
Further, we also impose the condition that the two  path of matrices $\{A_t\}$ and $\{B_t\}$ agree 
in a neighborhood of $t=1$. 
Therefore, the exponent functions satisfy
$\alpha(1)-\beta(1)\in \mathbb{Z}$.

Define the Hamiltonian  loop $\psi=\{\psi_t \}_{0\leq t\leq 2}$ in 
$\textup{Ham}^c(\mathbb{R}^4,\omega_0)$  as
\begin{eqnarray}
\label{e:loop}
\psi_t :=
\left\{
	\begin{array}{ll}
		\psi^{\bf a,\rho}_t  & t\in[0,1] \\
		\psi^{\bf b,\rho}_{1-t}  & t\in[1,2].
	\end{array}
\right.
\end{eqnarray}
Thus, its compactly supported Hamiltonian function is given by
\begin{eqnarray}
\label{e:Hamloop}
 H_t :=
\left\{
	\begin{array}{ll}
		H^{\bf a,\rho}_t  & t\in[0,1] \\
		H^{\bf b,\rho}_{1-t}  & t\in[1,2].
	\end{array}
\right.
\end{eqnarray}

The above observations on the paths
$\{\psi^{\bf a,\rho}_t\}$ and $\{\psi^{\bf b,\rho}_t\}$
and the computation of Lemma \ref{l:inte} 
imply the  following facts about the Hamiltonian loop  $\psi.$

%From Lemma \ref{l:inte} and the above observations on the Hamiltonian paths we have
%the following result.
\begin{prop}
\label{p:integral} 
Given $r_0,R_0\in \mathbb{R}$ such that $R_0>r_0>0$ there is a
Hamiltonian loop $\psi=\{\psi_t \}_{0\leq t\leq 2}$ defined as in (\ref{e:loop}) 
such that it is supported in $B_{R_0}$ and on $B_{r_0}$ it agrees with a loop
of unitary matrices. Moreover, its Hamiltonian function $H_t$ satisfies
\begin{eqnarray}
\label{e:winding}
\int_0^2 \int_{B_{r_0}} H_t\, \omega_0^2\, dt =\left( \frac{\pi^3 {r_0}^6}{6} \right) (\alpha(0)-\alpha(1)-\beta(0)+\beta(1)).
%(\alpha(0)-\alpha(1)-\beta(0)+\beta(1))
\end{eqnarray}
%for some $k\in\mathbb{Z}$ that depends only on the functions $\alpha$ and $\beta$.
\end{prop}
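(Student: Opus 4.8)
The plan is to establish the two qualitative properties of the loop $\psi$ and then to compute the integral (\ref{e:winding}) by splitting the time interval $[0,2]$ at $t=1$ and applying Lemma \ref{l:inte} to each half. The whole point is that each half is, on $B_{r_0}$, one of the paths already analyzed, so the bulk of the work has been done in the preliminary computations; what remains is bookkeeping of the concatenation.

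First I would check that $\psi$ is a genuine smooth loop in $\ham^c(\mathbb{R}^4,\omega_0)$ based at the identity, supported in $B_{R_0}$ and linear on $B_{r_0}$. Both constituent paths start at the identity: since $a_1(0)=a_4(0)=1$, $a_2(0)=a_3(0)=0$ and $\alpha(0)\in\mathbb{Z}$, the matrix $A_0$ is the identity, so $\psi^{\bf a}_0=\id$, and likewise $\psi^{\bf b}_0=\id$; hence $\psi_0=\psi_2=\id$. The two halves match smoothly at $t=1$ because the matrix paths $\{A_t\}$ and $\{B_t\}$ were assumed to agree in a neighborhood of $t=1$, so the cut-off flows agree there as well. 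Each $\psi^{\bf a,\rho}_t$ and $\psi^{\bf b,\rho}_t$ is supported in $B_{R_0}$ by construction of the bump function $\rho$, giving the claimed support. Finally, on $B_{r_0}$ we have $\rho\equiv1$, so there $\psi_t$ coincides with the linear unitary action of $A_t$ for $t\in[0,1]$ and of the reversed family $B_{2-t}$ for $t\in[1,2]$; since $A_0=B_0=\id$ and $A_1=B_1$ (this last matching being exactly where $\alpha(1)-\beta(1)\in\mathbb{Z}$ enters), this family is a loop in $U(2)$, which is the assertion that $\psi$ agrees with a loop of unitary matrices on $B_{r_0}$.

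For the integral I would write $\int_0^2=\int_0^1+\int_1^2$. On $[0,1]$ the Hamiltonian is $H^{\bf a,\rho}_t$, which equals $H^{\bf a}_t$ on $B_{r_0}$ because $\rho\equiv1$ there, so Lemma \ref{l:inte} gives $\frac{\pi^3 r_0^6}{6}\,(1+\alpha(0)-\alpha(1))$. On $[1,2]$ the loop traverses the $\bf b$-path in reverse; using that a path $\{\phi_s\}$ generated by $G_s$ has time-reversal $t\mapsto\phi_{2-t}$ generated by $-G_{2-t}$, the relevant Hamiltonian on $B_{r_0}$ is $-H^{\bf b}_{2-t}$. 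Substituting $s=2-t$ turns its time integral into $-\int_0^1\int_{B_{r_0}}H^{\bf b}_s\,\omega_0^2\,ds$, which by Lemma \ref{l:inte} equals $-\frac{\pi^3 r_0^6}{6}\,(1+\beta(0)-\beta(1))$. Adding the two contributions, the constant terms $\pm1$ cancel and one is left with exactly (\ref{e:winding}).

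The step needing the most care is the second half: keeping track simultaneously of the reparametrization $t\mapsto 2-t$ and of the accompanying sign change of the generating Hamiltonian under time-reversal. It is precisely this minus sign that cancels the two $+1$ contributions produced by Lemma \ref{l:inte}, so that the surviving quantity is the \emph{difference} $(\alpha(0)-\alpha(1))-(\beta(0)-\beta(1))$ rather than their sum. Getting this sign right is what makes the integral detect the relative winding of $\alpha$ against $\beta$, which is the feature that will later be exploited; an error here would collapse the formula to something that cannot be made nonzero by playing the two paths against each other.
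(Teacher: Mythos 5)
Your proof is correct and follows essentially the paper's own (very terse) argument: the paper offers no separate proof of Proposition \ref{p:integral}, stating only that the observations on the paths $\{\psi^{\bf a,\rho}_t\}$, $\{\psi^{\bf b,\rho}_t\}$ together with Lemma \ref{l:inte} imply it, and your computation --- split the time integral at $t=1$, use $\rho\equiv 1$ on $B_{r_0}$ to replace $H^{\bf a,\rho}_t$ by $H^{\bf a}_t$ there, apply Lemma \ref{l:inte} to each half --- is exactly that argument spelled out. In fact your bookkeeping of the second half is more careful than the paper's: Eq.~(\ref{e:Hamloop}) literally says the Hamiltonian on $[1,2]$ is $H^{\bf b,\rho}_{1-t}$, which taken at face value neither generates the reversed path nor yields (\ref{e:winding}) (the two $+1$ contributions from Lemma \ref{l:inte} would add instead of cancel, and one would pick up $\beta(-1)$ rather than $\beta(1)$); the correct generator is $-H^{\bf b,\rho}_{2-t}$, as you write, and it is precisely this minus sign that produces the difference $(\alpha(0)-\alpha(1))-(\beta(0)-\beta(1))$. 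So you have silently corrected a sign/reparametrization typo in (\ref{e:loop})--(\ref{e:Hamloop}), and your emphasis on that step is warranted.

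One sentence of yours does overreach, though: from $A_t=B_t$ near $t=1$ you conclude that ``the cut-off flows agree there as well.'' What follows is only $H^{\bf a,\rho}_t=H^{\bf b,\rho}_t$ near $t=1$; since the time-$t$ flow depends on the entire history of the Hamiltonian, $\psi^{\bf a,\rho}_1$ and $\psi^{\bf b,\rho}_1$ need not coincide on the annulus $B_{R_0}\setminus B_{r_0}$ (on $B_{r_0}$ they do coincide, both being the unitary map $A_1=B_1$, which is where $\alpha(1)-\beta(1)\in\mathbb{Z}$ enters, as you say). So the matching of the two halves at $t=1$ into an honest loop in $\textup{Ham}^c(\mathbb{R}^4,\omega_0)$ requires either an additional condition on the data or an additional argument. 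To be clear, the paper glosses over exactly the same point --- it asserts the concatenation is a loop under the same hypothesis --- so this is a gap you share with the source rather than a divergence from its proof; but it is worth flagging, since the gluing at $t=1$ is the only content of the proposition not already contained in Lemma \ref{l:inte}, and your one-line justification of it, as stated, is not valid.
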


The condition that the paths $\{\psi^{\bf a,\rho}_t\}$ and $\{\psi^{\bf b,\rho}_t\}$
agree in a neighborhood of 1, imply that $\alpha(0)-\alpha(1)-\beta(0)+\beta(1)$ is a integer.
Thus Eq. (\ref{e:winding}) is a kind of {\em winding number} for the loop of unitary matrices. 
Furthermore, it is possible to choose the functions $\alpha$ and $\beta$ so that 
the paths agree in a neighborhood of 1 and $\alpha(0)-\alpha(1)-\beta(0)+\beta(1)$ is non zero. 
From now on we fix the Hamiltonian loop $\psi$ in  $\textup{Ham}^c(\mathbb{R}^4,\omega_0)$
defined in (\ref{e:loop}) so that 
$\alpha(0)-\alpha(1)-\beta(0)+\beta(1)$ is equal to 1. 
The actual value is not important, what is relevant at this point  is that it is  a non zero integer.

On an open manifold $(M,\omega)$  the Calabi morphism
$\textup{Cal}: \pi_1(\textup{Ham}^c(M,\omega))\to \mathbb{R}$
is defined as
\begin{eqnarray*}
\textup{Cal}(\phi):=
\int_0^1 \int_{M} F_t\, \omega^n\, dt ,
\end{eqnarray*}
where the loop $\phi$
is generated by the compactly supported Hamiltonian  $F_t$. Moreover, if $(M,\omega)$
is exact then Calabi's morphism is identically zero.
Consequently, for the  the loop $\psi$ defined in  (\ref{e:loop})  we have that
\begin{eqnarray}
\label{e:null}
0=\int_0^2 \int_{\mathbb{R}^4} H_t\, \omega_0^2\, dt ,
%%\int_1^2 \int_{\mathbb{R}^4} H^{\bf b,\rho}_{1-t}\, \omega_0^2\, dt 
\end{eqnarray}
in contrast with the integral over $B_{r_0}$ that is non zero.

Equation  (\ref{e:null}) will be useful when we consider the loop $\psi$
in a Darboux chart on a closed manifold. 
For, in this case the normalization condition corresponds to zero mean.
%It is important to note that in this point, where we invoke Gromov's result  the
%contractility of $\textup{Ham}^c(\mathbb{R}^4,\omega_0)$, is the only part 
%of this note were in a indirect way  holomorphic curves appear.

%%%%%%%%%%%%%%%%%%%%%%%%%%%%%%%%%%%%%%
%%%%%%%%%%%%%%%%%%%%%%%%%%%%%%%%%%%%%%
\section{A loop of Hamiltonian diffeomorphisms  in the one-point blow up of infinite order}
%%%%%%%%%%%%%%%%%%%%%%%%%%%%%%%%%%%%%%
%%%%%%%%%%%%%%%%%%%%%%%%%%%%%%%%%%%%%%

In \cite{pedroza-hamiltonian} it is proved that if a closed symplectic manifold
admits a Hamiltonian circle action, then after blowing up one point the fundamental
group of the group of Hamiltonian diffeomorphisms has positive rank.
In this section we prove that the above result always holds, namely we prove
Theorem \ref{t:onepoint}. Henceforth, the hypothesis about the Hamiltonian circle
action is no longer required. We restrict to the four-dimensional case in view of the main
result of this paper; 
however the result presented in this section holds on any symplectic manifold of dimension
greater than or equal than four.

This section can be considered has the first step of the proof of the main theorem.
Recall that for any $R_0>0$,  the loop $\psi=\{\psi_t\}_{0\leq t\leq 2}$ defined in (\ref{e:loop}) 
is supported in the open ball $B_{R_0}\subset (\mathbb{R}^4,\omega_0)$. 
Let $(M,\omega)$ be a closed rational symplectic 4-manifold. For $R_0>0$ small enough,
by Darboux's theorem the loop $\psi\in \textup{Ham}^c(\mathbb{R}^4,\omega_0)$
can be regarded in $\textup{Ham}(M,\omega)$.

Since we are in a closed symplectic manifold $(M,\omega)$, we must normalized
the Hamiltonian function $H_t:(M,\omega)\to \mathbb{R}$.
Thus, for $t\in[0,1]$ define
$$
c_t:=\frac{1}{\textup{Vol}(M,\omega^2)}
 \int_{M} H_t\,{\omega^2}.
$$
Afterwards define $H^{\bf N}_t:M\to\mathbb{R}$ as
 $H^{\bf N}_t:= H_t-c_t$. Thus, $H^{\bf N}_t$  is the normalized Hamiltonian function
that induces the same loop $\psi$ in $\textup{Ham}(M,\omega)$.

Call $\iota B_{r_0}\subset M$ the image of $B_{r_0}\subset \mathbb{R}^4$
under the Darboux embedding.
Then the loop $\psi=\{\psi_t\}$ satisfies the following:
\begin{itemize}
\item $\psi_t(\iota(0)))=\iota (0)$ for all $t\in [0,2]$
\item $\psi_t$ behaves like a unitary matrix  on $\iota B_{r_0}$ for all $t\in [0,2]$.
\end{itemize}

Using  the embedded ball $\iota B_{r_0}\subset M$, define 
$(\widetilde M,\widetilde\omega_{r_0})$ to
be the one-point blow up  at $\iota(0)$ of $(M,\omega)$ of weight $r_0$. 
From the  above remarks on the loop $\psi$, it follows from  \cite[Sec. 3]{pedroza-hamiltonian} that $\psi$ 
induces a Hamiltonian  loop
$\widetilde\psi=\{\widetilde \psi_t\}_{0\leq t\leq 2}$ in $\textup{Ham}(\widetilde M,\widetilde\omega_{r_0})$.
For  appropriate values of $r_0$, we claim that 
$[\widetilde \psi]$ has infinite order in  $\pi_1(\textup{Ham}(\widetilde M,\widetilde\omega_{r_0})).$ 
We prove this using Weinstein's morphism 
$$
\mathcal{A}: \pi_1(\textup{Ham}(\widetilde M,\widetilde\omega_{r_0}))
\to \mathbb{R}/\mathcal{P}(\widetilde M,\widetilde\omega_{r_0}),
$$
where $\mathcal{P}(\widetilde M,\widetilde\omega_{r_0})$ is the period group.

Next we prove Theorem \ref{t:onepoint} that was stated at the Introduction. 
We give a more precise statement of the theorem in terms 
of the loop of Hamiltonian diffeomorphisms $\psi$ defined above and the weight of 
the blow up. Keep in mind that we state this result for four-dimensional symplectic
manifolds, but the same argument works in higher dimensions.

\begin{thm}
\label{t:onlyone2}
Let $r_0>0$ such that $\pi r_0^2$ is a transcendental number.
If $(M,\omega)$ is a rational symplectic 4-manifold, then the induced loop
$[\widetilde \psi]$ has infinite order in  $\pi_1(\textup{Ham}(\widetilde M,\widetilde\omega_{r_0})).$
\end{thm}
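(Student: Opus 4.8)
The plan is to evaluate Weinstein's morphism $\mathcal A$ on the class $[\widetilde\psi]$ and to exhibit it as an element of infinite order in $\mathbb R/\mathcal P(\widetilde M,\widetilde\omega_{r_0})$. Since $\mathcal A$ is a group homomorphism, one has $\mathcal A([\widetilde\psi]^n)=n\,\mathcal A([\widetilde\psi])$, so it suffices to prove that $n\,\mathcal A([\widetilde\psi])\notin\mathcal P(\widetilde M,\widetilde\omega_{r_0})$ for every integer $n\geq 1$; this already forces $[\widetilde\psi]$ to have infinite order. The first step is to record the period group. Because $H_2(\widetilde M)=H_2(M)\oplus\mathbb Z[E]$ and the exceptional sphere satisfies $\langle\widetilde\omega_{r_0},[E]\rangle=\pi r_0^2$, we get $\mathcal P(\widetilde M,\widetilde\omega_{r_0})=\mathcal P(M,\omega)+\pi r_0^2\,\mathbb Z$. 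As $(M,\omega)$ is rational, $\mathcal P(M,\omega)=\lambda_0\mathbb Z$ for a rational number $\lambda_0$ (possibly zero), and likewise $\textup{Vol}(M,\omega^2)=\int_M\omega^2$ is rational.

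The heart of the matter is the computation of $\mathcal A([\widetilde\psi])$. I would evaluate the action functional on the orbit $t\mapsto\widetilde\psi_t(\tilde p)$ of a point $\tilde p$ of the exceptional divisor $E$, capping that orbit by a disk lying in $E\cong\mathbb C P^1$. On $E$ the loop $\widetilde\psi$ acts as the projectivisation of the loop of unitary matrices of Proposition \ref{p:integral}, so the Hamiltonian-evaluation term $\int_0^2\widetilde H_t(\tilde p)\,dt$ together with the capping-disk area contribute only a rational multiple of the exceptional area $\pi r_0^2$. The decisive contribution instead comes from the normalisation constant: on the closed manifold $\widetilde M$ the generating Hamiltonian must be shifted to zero mean, against $\widetilde c_t=\textup{Vol}(\widetilde M,\widetilde\omega_{r_0}^2)^{-1}\int_{\widetilde M}\widetilde H_t\,\widetilde\omega_{r_0}^2$. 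Since $\widetilde\psi$ agrees with $\psi$ away from the blown-up ball, Equation (\ref{e:null}) together with the ball computation of Proposition \ref{p:integral} reduce $\int_0^2\int_{\widetilde M}\widetilde H_t\,\widetilde\omega_{r_0}^2\,dt$ essentially to $-\int_0^2\int_{B_{r_0}}H_t\,\omega_0^2\,dt=-\tfrac{(\pi r_0^2)^3}{6}$. Collecting the terms I expect, modulo $\mathcal P(M,\omega)$,
\begin{eqnarray*}
\mathcal A([\widetilde\psi])\equiv \frac{(\pi r_0^2)^3}{6\,\textup{Vol}(\widetilde M,\widetilde\omega_{r_0}^2)}+q\,\pi r_0^2,
\end{eqnarray*}
for some $q\in\mathbb Q$ produced by the exceptional divisor, where $\textup{Vol}(\widetilde M,\widetilde\omega_{r_0}^2)=\textup{Vol}(M,\omega^2)-\pi^2 r_0^4$.

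With this value the conclusion is pure number theory. Writing $s=\pi r_0^2$ and supposing $n\,\mathcal A([\widetilde\psi])=a\lambda_0+b\,s$ for some $n\geq 1$ and integers $a,b$, one clears the denominator $\textup{Vol}(M,\omega^2)-s^2$ to obtain a polynomial identity in $s$ of degree three whose coefficients involve only the rational numbers $\lambda_0$, $\textup{Vol}(M,\omega^2)$, $q$ and integers. A short check shows this polynomial is not identically zero (its top two coefficients cannot vanish simultaneously), so it is a genuine algebraic relation satisfied by $s=\pi r_0^2$ with rational coefficients. This is impossible when $\pi r_0^2$ is transcendental, the desired contradiction. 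Hence $\mathcal A([\widetilde\psi])$, and therefore $[\widetilde\psi]$, has infinite order.

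The main obstacle I anticipate lies in the middle step: rigorously controlling the contribution of the exceptional divisor to $\mathcal A([\widetilde\psi])$, that is, verifying that the Hamiltonian-evaluation and capping-disk terms amount only to rational multiples of $\lambda_0$ and $\pi r_0^2$, so that the sole transcendental dependence on $r_0$ enters, through the normalisation constant, as the cubic term of Proposition \ref{p:integral}. This demands a precise description of $\widetilde\omega_{r_0}$ and of the lifted, renormalised Hamiltonian in a neighbourhood of $E$. Once the cubic $\tfrac{(\pi r_0^2)^3}{6}$ is isolated, the transcendence of $\pi r_0^2$ against the rationality of $\lambda_0$ and $\textup{Vol}(M,\omega^2)$ makes the final step immediate.
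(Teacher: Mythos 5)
Your proposal is correct and follows essentially the same route as the paper: Weinstein's morphism as a homomorphism to $\mathbb{R}/\mathcal{P}(\widetilde M,\widetilde\omega_{r_0})$, the period group $\mathbb{Z}\langle q_1,\ldots,q_s,\pi r_0^2\rangle$, the cubic term $\left(\pi r_0^2\right)^3/6$ entering through the normalization constant via the vanishing of Calabi on $(\mathbb{R}^4,\omega_0)$, and the concluding polynomial-relation-versus-transcendence argument. The one step you flag as the main obstacle --- rigorously controlling the exceptional-divisor (evaluation and capping) contribution --- is precisely what the paper outsources to \cite[Thm. 1.1]{pedroza-hamiltonian}, which yields $\mathcal{A}(\widetilde\psi)=\bigl[\mathcal{A}(\psi)+\textup{Vol}(\widetilde M,\widetilde\omega_{r_0}^2)^{-1}\int_0^2\int_{\iota B_{r_0}}H^{\bf N}_t\,\omega^2\,dt\bigr]$ with $\mathcal{A}(\psi)=0$ because $\psi$ is null-homotopic in $\textup{Ham}(M,\omega)$; aside from an immaterial factor-of-two volume convention, your formula matches the paper's.
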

\begin{proof}
Since $(M,\omega)$ is rational, there are $q_1,\ldots, q_s\in \mathbb{Q}$ such that 
$\mathcal{P}(M,\omega)=\mathbb{Z}\langle q_1,\ldots, q_s \rangle$. In fact,
$\mathcal{P}(\widetilde M,\widetilde\omega_{r_0})=\mathbb{Z}\langle q_1,\ldots, q_s, \pi r_0^2 \rangle$
since the area of the line in the exceptional divisor is $\pi r_0^2$.

From \cite[Thm. 1.1]{pedroza-hamiltonian}, it is possible to  compute $\mathcal{A}(\widetilde\psi)$ in terms 
solely of  the loop $\psi$. 
Namely,
\begin{eqnarray*}
\mathcal{A}(\widetilde\psi)=
\left[  \mathcal{A}(\psi) +  \frac{1}{\textup{Vol} (\widetilde M,\widetilde\omega_{r_0}^2)}
\int_0^2 \int_{\iota B_{r_0}} H^{\bf N}_t\, {\omega^2}\,dt
\right].
\end{eqnarray*}
Since $\{\psi_t\}$ is null  homotopic, it follows that $\mathcal{A}(\psi)=0 $ in $\mathbb{R}/
\mathcal{P}(M,\omega)$.
Observe that the integral of $H^{\bf N}_t$ over $\iota B_{r_0}\subset M$ is the same as the integral over
$B_{r_0}\subset \mathbb{R}^4$. Therefore, from Proposition \ref{p:integral} we have that  
\begin{eqnarray*}
\int_0^2 \int_{\iota B_{r_0}} H^{\bf N}_t\, {\omega^2} \,dt
&=&
\int_0^2 \int_{\iota B_{r_0}} H_t-c_t\, {\omega^2} \,dt\\
&=&
\left( \frac{\pi^3 {r_0}^6}{6} \right) \cdot 1 - \textup{Vol}  (B_{r_0},\omega^2)\int_0^2  c_t\,dt.
\end{eqnarray*}
%where $m\in\mathbb{Z}$ is non zero.

The functions $H_t$ are supported $B_{R_0}\subset\mathbb{R}^4$, thus they can also
be considered as a functions on $M$.
Hence using  Eq.  (\ref{e:null}) it follows that 
\begin{eqnarray*}
\int_0^2  c_t\,dt 
&=&
\int_0^2 \frac{1}{\textup{Vol}(M,\omega^2)}
 \int_{M} H_t\,{\omega^2}\, dt\\
&=&
 \frac{1}{\textup{Vol}(M,\omega^2)}
\int_0^2 \int_{M} H_t\,{\omega^2}\,dt=0.
 \end{eqnarray*}

If $V$ stands for $\textup{Vol}(M,\omega^2)$,
then $\textup{Vol} (\widetilde{M},\widetilde\omega_{r_0}^2)=V-\pi^2r_0^4/2$.
Substituting the above computations, we have
\begin{eqnarray*}
\mathcal{A}(\widetilde\psi)=
\left[  \frac{1}{V-\pi^2r_0^4/2}\left( \frac{\pi^3 {r_0}^6}{6} \right) 
\right] \in \mathbb{R}/\mathbb{Z}\langle q_1,\ldots ,q_s, \pi r_0^2 \rangle.
\end{eqnarray*}
Then the equation $\mathcal{A}(\widetilde\psi^m)=0$, for $m\in \mathbb{N}$,
is equivalent to a polynomial equation on $\pi r_0^2$ with rational coefficients. Recall that
$V\in\mathbb{Q}.$
Since $\pi r_0^2$ is transcendental,  the Hamiltonian loop
$\widetilde \psi=[\{\widetilde\psi_t\}_{0\leq t\leq 2}]$ 
has infinite order in  $\pi_1(\textup{Ham}(\widetilde{M},\widetilde\omega_{r_0})).$
\end{proof}

%%%%%%%%%%%%%%%%%%%%%%%%%%%%%%%%%
%%%%%%%%%%%%%%%%%%%%%%%%%%%%%%%%%
\section{Proof of the main theorem}
%%%%%%%%%%%%%%%%%%%%%%%%%%%%%%%%%
%%%%%%%%%%%%%%%%%%%%%%%%%%%%%%%%%

The proof of Theorem \ref{t:onlyone2} gives the blueprint that we follow in order to
prove the main result. The proof of the main theorem deals with
$k$ distinct Hamiltonian loops supported in $k$ mutually disjoint balls. 
Thereby, 
the hypothesis in Theorem \ref{t:onlyone2}, about   $\pi r_0$ being 
transcendental, is replaced 
by the following lemma.

\begin{lem} 
\label{l:help}
Given $k\in \mathbb{N}$ there exist $k$ distinct real numbers $y_1,\ldots, y_k$
such that for any  $j,s\in \{1,\ldots,k\}$ the equation
$$
(a+q_1 y_1+\cdots+ q_k y_k) ( b-(y_1^2+\cdots +y_k^2))+cy_j^3+ y_s^3=0
$$
has no solution for any $q_1,\ldots, q_k,a,b,c\in \mathbb{Q}$.
\end{lem}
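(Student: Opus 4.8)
The plan is to take $y_1,\dots,y_k$ algebraically independent over $\mathbb{Q}$ and then turn the statement into a purely formal non-vanishing assertion about a polynomial. Such a tuple exists because the algebraic closure of any countable subfield of $\mathbb{R}$ is again countable: pick $y_1$ transcendental, then $y_2\in\mathbb{R}$ transcendental over $\mathbb{Q}(y_1)$, and continue, so that after $k$ steps no nonzero polynomial with rational coefficients vanishes at $(y_1,\dots,y_k)$. In particular the $y_i$ are automatically distinct, since $Y_i-Y_{i'}$ would otherwise be such a relation. The payoff of this choice is the equivalence: for $P\in\mathbb{Q}[Y_1,\dots,Y_k]$ one has $P(y_1,\dots,y_k)=0$ if and only if $P$ is the zero polynomial.

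Next I fix indices $j\neq s$ together with rationals $a,b,c,q_1,\dots,q_k$ and set
\[
P(Y_1,\dots,Y_k)=\Big(a+\sum_{i}q_iY_i\Big)\Big(b-\sum_{l}Y_l^2\Big)+cY_j^3+Y_s^3 .
\]
If the equation in the statement had a solution, then $P$ would vanish at the algebraically independent tuple and hence be identically zero; I will contradict this by comparing coefficients. The cubic part of the product $(a+\sum q_iY_i)(b-\sum Y_l^2)$ is $-\sum_{i,l}q_iY_iY_l^2$, while the added terms $cY_j^3$ and $Y_s^3$ are pure cubes. Hence for $i\neq l$ the coefficient of the mixed monomial $Y_iY_l^2$ in $P$ is exactly $-q_i$. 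Since $k\geq 2$, every $i$ admits some $l\neq i$, so $P\equiv 0$ forces $q_i=0$ for all $i$. Now read off the coefficient of the pure cube $Y_s^3$: the product contributes $-q_s$, the term $cY_j^3$ contributes nothing because $j\neq s$, and the summand $Y_s^3$ contributes $+1$, so this coefficient equals $-q_s+1=1\neq 0$. This contradicts $P\equiv 0$, whence $P(y_1,\dots,y_k)\neq 0$, which is precisely the asserted non-existence of a solution.

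The heart of the matter is locating a single coefficient that cannot be made to cancel, and this is the only step demanding thought. The free parameter $c$ rides on $Y_j^3$ and can always erase that cube, so $Y_j^3$ carries no information; what rescues the argument is the distinguished cube $Y_s^3$, whose coefficient is pinned to $1$ and which survives once the mixed cubic monomials have forced every $q_i$ to vanish. This also explains the role of $j\neq s$ (equivalently $k\geq 2$): were the two cubes equal they would merge into $(c+1)Y_s^3$, whose coefficient is no longer pinned and is annihilated by $c=-1$. The remaining ingredients are routine, and — in contrast with Theorem \ref{t:onlyone2} — no transcendence fact about any explicit number is required. Finally, the same bookkeeping applies verbatim when the cubic part is an arbitrary $\sum_l m_lY_l^3$ in place of $cY_j^3+Y_s^3$: the mixed terms again kill all coefficients of the linear factor and then each pure cube forces the corresponding $m_l$ to vanish, which is the form ultimately used to bound the rank from below.
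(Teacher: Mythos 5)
Your proof is correct, and it supplies what the paper does not: the paper gives no argument for Lemma \ref{l:help} beyond the remark that it ``is a consequence of the fact that $\mathbb{R}$ is an infinite dimensional $\mathbb{Q}$-vector space.'' Your route --- choose $(y_1,\dots,y_k)$ algebraically independent over $\mathbb{Q}$ (legitimately constructed via countability of algebraic closures) and reduce the equation to the identical vanishing of an explicit polynomial, refuted by coefficient comparison --- is not just a different route but a necessary strengthening: mere $\mathbb{Q}$-linear independence, which is all that infinite dimensionality as a $\mathbb{Q}$-vector space provides, does not suffice. Indeed $y_1=2^{1/3}$ and $y_2=3^{1/3}$ are $\mathbb{Q}$-linearly independent, yet $a=q_1=q_2=0$, $c=-3/2$, $j=1$, $s=2$ solves the equation for every $b$, since $cy_1^3+y_2^3=-3+3=0$. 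What your argument actually invokes is the infinite transcendence degree of $\mathbb{R}$ over $\mathbb{Q}$, a strictly stronger fact than the one the paper cites, so your write-up repairs the paper's justification rather than merely expanding it.

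Two further points are worth recording. First, you correctly noticed that the lemma as literally stated, with $j,s$ unrestricted, is false for \emph{every} choice of $y_1,\dots,y_k$: taking $j=s$, $c=-1$ and $a=q_1=\cdots=q_k=0$ makes the left-hand side vanish identically for any $b$. Your restriction to $j\neq s$, together with the generalized form you state at the end (cubic part $\sum_l m_lY_l^3$, no solution as soon as some $m_l\neq 0$), is exactly what the proof of Theorem \ref{t:main} consumes, both for $\mathcal{A}((\widetilde\psi^{(j)})^m)\neq 0$, where the cubic part is a single pinned cube $\tfrac{m}{6}y_j^3$, and for $\mathcal{A}((\widetilde\psi^{(j)})^m)\neq\mathcal{A}((\widetilde\psi^{(s)})^n)$. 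Second, one residual caveat: your mixed-monomial step requires $k\geq 2$, and for $k=1$ even the generalized statement with $b$ ranging over all of $\mathbb{Q}$ fails (take $b=0$, $a=0$, $q_1=m_1$). This is harmless for the paper, since in the application $b=2V\neq 0$ is a fixed positive rational and the case $k=1$ is already settled by Theorem \ref{t:onlyone2}, but it should be flagged if the lemma is meant to stand on its own.
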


The proof of the lemma is a consequence of the fact that $\mathbb{R}$ is  an infinite dimensional
$\mathbb{Q}$-vector space.
Next we provide the proof of the main theorem builded on the ideas of the 
proof of Theorem \ref{t:onlyone2}.

For the proof of the main theorem is important to
consider the complex 2-dimensional projective space
$(\mathbb{C}P^2,\omega_{FS})$ endowed 
with its standard Fubini-Study symplectic form.
If $\omega_{FS}$ is normalized so that 
 $(\mathbb{C}P^2\setminus\mathbb{C}P^1 ,\omega_{FS})$
is symplectomorphic to the open ball of radius $R$ in $(\mathbb{R}^4,\omega_0)$, then
the area of a complex line is 
$\pi R^2$. Therefore, the period group of $(\mathbb{C}P^2,\omega_{FS})$
is $\mathbb{Z}\langle \pi R^2\rangle$ and $\textup{Vol}(\mathbb{C}P^2,\omega_{FS}^2)=(\pi R^2)^2/2$.

\begin{proof}[Proof of Theorem \ref{t:main}]
Given  $k>0$, it follows from Lemma \ref{l:help} that 
there are $k$ distinct  numbers $r_1,\ldots, r_k\in \mathbb{R}_{>0}$ where
$y_j:=\pi r_j^2$ for $j\in\{1,\ldots , n\}$.
%$\pi r_1^2,\ldots, \pi r_k^2$ satisfy Lemma \ref{l:help}.

%%%%%%%%%%%%%%%%%%%%%%%%%%
Let $R_1,\ldots,R_k \in \mathbb{R}$ be 
any numbers 
such that $R_j>r_j$ for all $j\in\{1,\ldots ,k\}$.
On $\mathbb{R}^4$, fix $k$  mutually disjoint balls 
$B_1,\ldots,  B_k$ centered at $p_1,\ldots,p_k$ and
of radii $R_1,\ldots,R_k$ respectively. Inside
each ball $B_j$ fix a a smaller ball  $B_{r_j}$ of radius $r_j$
centered at $p_j$.
Next, consider the complex  2-dimensional projective space
$(\mathbb{C}P^2,\omega_{FS})$ such that the symplectic form
is normalized so that
 $(\mathbb{C}P^2\setminus\mathbb{C}P^1 ,\omega_{FS})$
is symplectomorphic to an open ball of radius $R$  in $(\mathbb{R}^4,\omega_0)$ that contains
all the balls $B_1,\ldots, B_k$ and $\pi R^2$ is a rational number.
Therefore the period group of $(\mathbb{C}P^2,\omega_{FS})$
is $\mathbb{Z}\langle \pi R^2 \rangle$.

Call $\iota_j B_{r_j}\subset \mathbb{C}P^2$
the image of the fixed ball $B_{r_j}\subset\mathbb{R}^4$. Hence in 
$(\mathbb{C}P^2,\omega_{FS})$
 there are $k$ mutually disjoint embedded balls $\iota_1 B_{r_1},\ldots, \iota_k B_{r_k}$.
Denote by $(\mathbb{C}P^2 \#_k\overline{\mathbb{C}P}^2,\widetilde\omega_{\bf r})$ 
the symplectic manifold that is obtained by blowing up the $k$ points
$\iota_1(p_1),\ldots ,\iota_k(p_k)$ in $(\mathbb{C}P^2,\omega_{FS})$
where the weight at $\iota_j(p_j)$ is $r_j.$ That is, the embedded ball 
$\iota_j B_{r_j}$ is removed from the torus for $j\in\{1,\ldots,k\}.$ 
$(\mathbb{C}P^2 \#_k\overline{\mathbb{C}P}^2,\widetilde\omega_{\bf r})$ is the
desired closed symplectic 4-manifold of the main theorem. Notice that it is simply connected.

Next we define  $k$ Hamiltonian loops.
From Proposition \ref{p:integral} for each $j\in\{1,\ldots, k\}$  there is a loop
 $\psi^{(j)}=\{\psi^{(j)}_t\}_{0\leq t\leq 2}$ in $\textup{Ham}^c(\mathbb{R}^4,\omega_0)$
supported in  the ball of radius $R_j$ such that inside the ball of radius $r_j$ it agrees with
a loop of unitary matrices. Therefore, the $k$ loops in $\textup{Ham}^c(\mathbb{R}^4,\omega_0)$ induced
 $k$ loops $\textup{Ham}(\mathbb{C}P^2,\omega_\textup{FS})$ that we also denoted by
$\psi^{(1)},\ldots ,\psi^{(k)}$.

For every  $j\in \{1,\ldots ,k\}$, the loop $\psi^{(j)}$ on $(\mathbb{C}P^2,\omega_\textup{FS})$ 
 behaves as a loop of unitary matrices on each of the embedded ball $\iota_s B_{r_s}$. Moreover, 
if $s\neq j$ it  is the constant loop on $\iota_s B_{r_s}$. 
In any case, the loop $\psi^{(j)}$
%behaves as a loop of unitary matrices on every ball $B_{r_s}$ and in particular it 
fixes the  points  $\iota_1(p_1),\ldots, \iota_k(p_k)$.
%%restricted to the embedded balls $\iota_s B_{r_s}$ is either the identity if $s\neq j$
%%or it behaves as a loop of unitary matrices  if $s=j$. 
Therefore, from 
\cite[Sec. 3]{pedroza-hamiltonian} it follows that $\psi^{(j)}$ induces a loop 
$\widetilde \psi^{(j)}$ in $\textup{Ham}(\mathbb{C}P^2 \#_k\overline{\mathbb{C}P}^2,\widetilde\omega_{\bf r})$.
We claim that the loops $\widetilde \psi^{(1)},\ldots, \widetilde \psi^{(k)}$  generate
a subgroup isomorphic to $\mathbb{Z}^k$ in 
$\pi_1( \textup{Ham} (\mathbb{C}P^2 \#_k\overline{\mathbb{C}P}^2,\widetilde\omega_{\bf r})  )$.

Fix $j\in\{1,\ldots k\}$. The corresponding Hamiltonian $H_t^{(j)}$ of $\psi^{(j)}$
is compactly supported, as before let $H_t^{(j),{\bf N}}: (\mathbb{C}P^2,\omega_\textup{FS})\to\mathbb{R}$ 
be its normalization.
As in the proof of Theorem \ref{t:onlyone2} we have that
\begin{eqnarray*}
\int_0^2 \int_{\iota_j B_{r_j}} H_t^{(j),{\bf N}}\, {\omega^2} \,dt
=\left( \frac{\pi^3 {r_j}^6}{6} \right) \cdot 1
\end{eqnarray*}
%for some non zero $k_j\in\mathbb{Z}$ 
and
\begin{eqnarray*}
\mathcal{A}(\widetilde\psi^{(j)})=
\left[  \frac{1}{V-\pi^2(r_1^4+\cdots +r_k^4)/2}\left( \frac{\pi^3 {r_j}^6}{6} \right) 
\right] \in \mathbb{R}/\mathbb{Z}\langle \pi R^2 ,\pi r_1^2 ,\ldots , \pi r_k^2 \rangle.
\end{eqnarray*}
where $V:=\textup{Vol}({\mathbb{C}P}^2,\omega_\textup{FS}^2)\in\mathbb{Q}$.

Since $\pi R^2 \in \mathbb{Q}$
and
 the  numbers $\pi r_1^2 ,\ldots , \pi r_k^2$ were chosen according to 
Lemma \ref{l:help}, it follows that for any  $m\in\mathbb{Z}_{>0}$
the equation $\mathcal{A}((\widetilde\psi^{(j)})^m)=0$ does not hold.
Therefore $[\widetilde\psi^{(j)}]$ has infinite order in 
$\pi_1( \textup{Ham} (\mathbb{C}P^2 \#_k\overline{\mathbb{C}P}^2,\widetilde\omega_{\bf r})   )$.
The same reasoning implies that $\mathcal{A}((\widetilde\psi^{(j)})^m)\neq\mathcal{A}((\widetilde\psi^{(s)})^n)$
for distinct $j,s\in\{1,\ldots,k\}$ and any $m,n\in\mathbb{Z}_{>0}$.
Henceforth the rank of $\pi_1( \textup{Ham} (\mathbb{C}P^2 \#_k\overline{\mathbb{C}P}^2,\widetilde\omega_{\bf r}) )$
is at least $k$.
\end{proof}

%%%%%%%%%%%%%%%%%%%%%%%%%%%%%%%%%
%%%%%%%%%%%%%%%%%%%%%%%%%%%%%%%%%
\section{Calabi's morphism}
%%%%%%%%%%%%%%%%%%%%%%%%%%%%%%%%%
%%%%%%%%%%%%%%%%%%%%%%%%%%%%%%%%%

%In this section we write explicitly a consequence of the arguments of Section \ref{s:compu}
%for  Calabi's morphism.
%For a non compact symplectic manifold $(M,\omega)$ the Calabi morphism
%$\textup{Cal}: \pi_1(\textup{Ham}^c(M,\omega))\to \mathbb{R}$
%is defined as
%\begin{eqnarray*}
%\textup{Cal}(\phi):=
%\int_0^1 \int_M F_t\, \omega_0^2\, dt ,
%\end{eqnarray*}
%where the loop $\phi$
%is generated by the compactly supported Hamiltonian  $F_t$.

As noted before,  Calabi's morphism on 
$\pi_1(\textup{Ham}^c(\mathbb{R}^4,\omega_0))$ is trivial.
However, the arguments used before show that  Calabi's morphism on 
the one-point blow up of $(\mathbb{R}^4,\omega_0)$ is non trivial

Fix $r>0$, let $(\widetilde{\mathbb{R}^4},\widetilde\omega_r)$ be the blowup
of the origin in $(\mathbb{R}^4,\omega_0)$ of weight $r$. 
As we have seen the loop
$\psi$ in $\textup{Ham}^c(\mathbb{R}^4,\omega_0)$ induces the loop
$\widetilde \psi$ in $\textup{Ham}^c(\widetilde{\mathbb{R}^4},\widetilde\omega_r)$.

\begin{prop}
\label{p:cal}
For any $r>0$, the Calabi morphism on  $\textup{Ham}^c(\widetilde{\mathbb{R}^4},
\widetilde\omega_r)$ does not vanish.
\end{prop}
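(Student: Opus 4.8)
The plan is to compute $\textup{Cal}(\widetilde\psi)$ explicitly and exhibit it as a nonzero multiple of $r^6$, following the scheme of the proof of Theorem~\ref{t:onlyone2} but now on the open blow-up, where neither the volume normalization nor the reduction modulo the period group intervenes.

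First I would isolate the point that makes a nonzero value possible. On $(\mathbb{R}^4,\omega_0)$ the symplectic form is exact, which is precisely why $\textup{Cal}(\psi)=0$; this is the content of Eq.~(\ref{e:null}). After blowing up, $(\widetilde{\mathbb{R}^4},\widetilde\omega_r)$ is no longer exact, since the exceptional sphere $E$ carries positive area $\pi r^2$. Hence the cohomological obstruction that forces Calabi to vanish on $\mathbb{R}^4$ disappears, leaving room for a nonzero invariant.

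For the computation I would argue exactly as in Theorem~\ref{t:onlyone2}. By the blow-up formula of \cite[Thm.~1.1]{pedroza-hamiltonian}, transcribed from Weinstein's morphism to the Calabi morphism, the invariant of the induced loop is governed by the local integral at the blown-up ball; since $\psi$ is a loop of unitary matrices near that point and $\textup{Cal}(\psi)=0$, this yields
\begin{eqnarray*}
\textup{Cal}(\widetilde\psi)=\int_0^2\!\!\int_{B_r} H_t\,\omega_0^2\,dt=\frac{\pi^3 r^6}{6}.
\end{eqnarray*}
This is the unnormalized, un-quotiented counterpart of the expression $\frac{1}{V-\pi^2r^4/2}\cdot\frac{\pi^3 r^6}{6}$ from that proof: dropping the factor $1/\textup{Vol}(\widetilde M)$ and the reduction modulo $\mathcal{P}$ is exactly what distinguishes the Calabi morphism on an open manifold from Weinstein's morphism on a closed one. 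As a consistency check I would split $\widetilde{\mathbb{R}^4}=(\mathbb{R}^4\setminus B_r)\cup\mathcal{E}$, with $\mathcal{E}$ the neighborhood of $E$ replacing $B_r$; on the complement of $B_r$ one has $\widetilde H_t=H_t$ and $\widetilde\omega_r=\omega_0$, so by Eq.~(\ref{e:null}) and Proposition~\ref{p:integral} that piece contributes $-\pi^3 r^6/6$, the remainder coming from the lifted unitary action over $\mathcal{E}$.

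The hardest part will be making this exceptional-divisor bookkeeping rigorous---adapting the formula of \cite{pedroza-hamiltonian} from the normalized, $\mathbb{R}/\mathcal{P}$-valued Weinstein morphism to the unnormalized, $\mathbb{R}$-valued Calabi morphism, and verifying that the contribution of the lifted action over $\mathcal{E}$ combines with the bulk term $-\pi^3 r^6/6$ to give a nonzero total rather than cancelling it. Once this is in place the proposition is immediate, since $\pi^3 r^6/6\neq0$ for every $r>0$; in particular, and in contrast with Theorem~\ref{t:onlyone2}, no transcendence assumption on $\pi r^2$ is required, precisely because there is no period group to quotient by.
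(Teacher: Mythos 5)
Your proposal follows essentially the same route as the paper: invoke the blow-up relation from \cite{pedroza-hamiltonian} expressing $\textup{Cal}(\widetilde\psi)$ in terms of $\textup{Cal}(\psi)$ and the local integral of $H_t$ over $B_r$, then feed in Proposition \ref{p:integral} together with the vanishing $\textup{Cal}(\psi)=0$ coming from exactness of $\omega_0$ (Eq.~(\ref{e:null})). The one discrepancy is the constant, and it sits precisely in the step you deferred as ``the hardest part'': the formula the paper actually quotes (Eq.~(\ref{e:calabi}), from \cite[Sec.~1]{pedroza-hamiltonian}) is $\textup{Cal}(\widetilde\psi)=\textup{Cal}(\psi)-\frac{1}{2}\int_0^2\int_{B_r}H_t\,\omega^2\,dt$, so the correct value is $-\pi^3r^6/12$ rather than your $\pi^3r^6/6$; your heuristic that Calabi on the blow-up is just the Weinstein formula with the volume normalization and the quotient by $\mathcal{P}$ dropped misses the factor $-\frac{1}{2}$ contributed by the exceptional-divisor bookkeeping. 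Since the proposition asserts only nonvanishing, this error is harmless here --- the coefficient is nonzero, so your argument's structure delivers the conclusion once the precise formula is in hand, and your observation that no transcendence hypothesis on $\pi r^2$ is needed (unlike Theorem \ref{t:onlyone2}) matches the paper.
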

\begin{proof}
Let $\psi$ be the Hamiltonian loop in $\textup{Ham}^c({\mathbb{R}^4},
\omega_0)$ defined in Eq. \ref{e:loop} and let 
 $\widetilde\psi$ be the induced loop in $\textup{Ham}^c(\widetilde{\mathbb{R}^4},
 \widetilde\omega_r)$. According to \cite[Sec. 1]{pedroza-hamiltonian},
$\textup{Cal}(\widetilde\psi)$ and $\textup{Cal}(\psi)$ are related as
\begin{eqnarray}
\label{e:calabi}
\textup{Cal}(\widetilde\psi) =\textup{Cal}(\psi)
-\frac{1}{2} \int_0^2\int_{B_r} H_t\, \omega^2\, dt 
\end{eqnarray}
where $H_t:(\mathbb{R}^4,\omega)\to\mathbb{R}$ is the compactly supported Hamiltonian
function of the loop $\psi$ defined in Eq. (\ref{e:Hamloop}). Thus from Proposition \ref{p:integral},
\begin{eqnarray*}
\textup{Cal}(\widetilde\psi)&=&
-\frac{\pi^3 {r}^6}{12}  \cdot 1.
\end{eqnarray*}
%on $\textup{Ham}^c(\widetilde{\mathbb{R}}^4,\widetilde\omega_{r_0})$.
%Thus $\pi_1(\textup{Ham}^c(\widetilde{\mathbb{R}}^4,\widetilde\omega_{r_0}))$
%is non trivial.
\end{proof}
\begin{rema}
The above result is true for any dimension; Calabi's morphism on  
$\textup{Ham}^c(\widetilde{\mathbb{R}^{2n}}, \widetilde\omega_r)$ does not vanish for $n\geq 2$ and
 $r>0$. 
 \end{rema}

Finally, note that since Calabi's morphism does not vanish  on
$\pi_1(\textup{Ham}^c(\widetilde{\mathbb{R}^4},\widetilde\omega_r))$
it does not descend to $\textup{Ham}^c(\widetilde{\mathbb{R}^4},\widetilde\omega_r)$.
Furthermore, for the Hamiltonian loop $\widetilde\psi$ in $\textup{Ham}^c(\widetilde{\mathbb{R}^4},
\widetilde\omega_r)$ that appears in the previous proof we have that
\begin{eqnarray*}
\ell (\widetilde\psi) \geq \frac{\pi^3 {r}^6}{12}.
\end{eqnarray*}
Here $\ell (\cdot)$ stands for the Hofer length of the class in $\pi_1(\textup{Ham}(\cdot))$.
For the precise definition of   $\ell (\cdot)$, see
\cite[Sec. 7.3]{polterovich-thegeometry}.

%%%%%%%%%%%%%%%%%%%%%%%%%%%%%%
%%%%%%%%%%%%%%%%%%%%%%%%%%%%%%
%%%%%%%%%%%%%%%%%%%%%%%%%%%%%%
%%%%%%%%%%%%%%%%%%%%%%%%%%%%%%
%%%%%%%%%%%%%%%%%%%%%%%%%%%%%%
%%%%%%%%%%%%%%%%%%%%%%%%%%%%%%

\bibliographystyle{acm}
\bibliography{/Users/Andrés/Dropbox/Documentostex/Ref}    %%%lap

\begin{thebibliography}{10}

\bibitem{abreu-mcduff-topology}
{\sc Abreu, M., and McDuff, D.}
\newblock Topology of symplectomorphism groups of rational ruled surfaces.
\newblock {\em J. Amer. Math. Soc. 13}, 4 (2000), 971--1009 (electronic).

\bibitem{auroux-smith}
{\sc Auroux, D., and Smith, I.}
\newblock Fukaya categories of surfaces, spherical objects, and mapping class
  groups, \texttt{arXiv.2006.09689}.

\bibitem{Evans-symplectic-mapping}
{\sc Evans, J.~D.}
\newblock Symplectic mapping class groups of some {S}tein and rational
  surfaces.
\newblock {\em J. Symplectic Geom. 9}, 1 (2011), 45--82.

\bibitem{gromov-psudo}
{\sc Gromov, M.}
\newblock Pseudoholomorphic curves in symplectic manifolds.
\newblock {\em Invent. Math. 82}, 2 (1985), 307--347.

\bibitem{kislev-compact}
{\sc Kislev, A.}
\newblock Compactly supported {H}amiltonian loops with a non-zero {C}alabi
  invariant.
\newblock {\em Electron. Res. Announc. Math. Sci. 21\/} (2014), 80--88.

\bibitem{Lalonde-Pin}
{\sc Lalonde, F., and Pinsonnault, M.}
\newblock The topology of the space of symplectic balls in rational
  4-manifolds.
\newblock {\em Duke Math. J. 122}, 2 (2004), 347--397.

\bibitem{mcduff-blowup}
{\sc McDuff, D.}
\newblock The symplectomorphism group of a blow up.
\newblock {\em Geom. Dedicata 132\/} (2008), 1--29.

\bibitem{ms}
{\sc McDuff, D., and Salamon, D.}
\newblock {\em Introduction to symplectic topology}, second~ed.
\newblock Oxford Mathematical Monographs. The Clarendon Press, Oxford
  University Press, New York, 1998.

\bibitem{pedroza-seidel}
{\sc Pedroza, A.}
\newblock Seidel's representation on the {H}amiltonian group of a {C}artesian
  product.
\newblock {\em Int. Math. Res. Not. IMRN}, 14 (2008), Art. ID rnn049, 19.

\bibitem{pedroza-hamiltonian}
{\sc Pedroza, A.}
\newblock {H}amiltonian loops on the symplectic blow up.
\newblock {\em J. Symplectic Geom. 16}, 3 (2018), 839--856.

\bibitem{polterovich-thegeometry}
{\sc Polterovich, L.}
\newblock {\em The geometry of the group of symplectic diffeomorphisms}.
\newblock Lectures in Mathematics ETH Z\"urich. Birkh\"auser Verlag, Basel,
  2001.

\bibitem{seidel-pi1of}
{\sc Seidel, P.}
\newblock {$\pi_1$} of symplectic automorphism groups and invertibles in
  quantum homology rings.
\newblock {\em Geom. Funct. Anal. 7}, 6 (1997), 1046--1095.

\bibitem{smale}
{\sc Smale, S.}
\newblock Diffeomorphisms of the {$2$}-sphere.
\newblock {\em Proc. Amer. Math. Soc. 10\/} (1959), 621--626.

\bibitem{weinstein-coho}
{\sc Weinstein, A.}
\newblock Cohomology of symplectomorphism groups and critical values of
  {H}amiltonians.
\newblock {\em Math. Z. 201}, 1 (1989), 75--82.

\end{thebibliography}
%\bibliography{/Users/macbookmb240/Dropbox/Documentostex/Ref.bib}  %%%maclaptop
%\bibliography{/Users/andres/Dropbox/Documentostex/Ref.bib} %%%%MACOFI
%\bibliography{Ref}
\end{document}